\def\dom{\mathrm{dom}\,}
\def\supp{\mathrm{supp}\,}
\def\sigmam{\sigma\hspace{-1mm}_{_M}}
\def\sigman{\sigma\hspace{-1mm}_{_N}}
\newtheorem{theorem}{Theorem}[section]
\newtheorem{lemma}[theorem]{Lemma}
\newtheorem{proposition}[theorem]{Proposition}
\theoremstyle{definition}
\newtheorem{definition}[theorem]{Definition}
\newtheorem{fact}[theorem]{Fact}
\theoremstyle{remark}
\numberwithin{equation}{section}
\begin{document}

\title{Orlicz functions that do not satisfy\\ the $\Delta_2$-condition and high order\\ Gateaux smoothness in $ h_M(\Gamma) $}

%    Only \author and \address are required; other information is
%    optional.  Remove any unused author tags.

%    author one information
% \author[short version for running head]{name for top of paper}
\author{Milen Ivanov\footnote{Radiant Life Technologies Ltd., Nicosia, Cyprus, milen@radiant-life-technologies.com, The study  is financed by the European Union-NextGenerationEU, through the National Recovery and Resilience Plan of the Republic of Bulgaria, project SUMMIT BG-RRP-2.004-0008-C01.},
Stanimir Troyanski\footnote{Institute of Mathematics and Informatics, Bulgarian Academy of Sciences, bl. 8, Acad. G. Bonchev Str., 1113 Sofia, Bulgaria, and
Departamento de Matem\'aticas, Universidad de Murcia, Campus de Espinardo, 30100 Murcia, Spain, stroya@um.es, Supported by Grant PID2021-122126NB-C32 funded by
MCIN/AEI/10.13039/501100011033 and by "ERDF A way of making Europe", by the European Union.},
and Nadia Zlateva\footnote{Sofia University, Faculty of Mathematics and Informatics,  5 James Bourchier Blvd., 1164 Sofia,  Bulgaria, zlateva@fmi.uni-sofia.bg, The study is financed by the European Union-NextGenerationEU, through the National Recovery and Resilience Plan of the Republic of Bulgaria, project SUMMIT BG-RRP-2.004-0008-C01.}}

\date{\bigskip \bigskip Dedicated to Rumen Maleev, in memoriam}

\maketitle

\newpage

%    Abstract is required.
\begin{abstract}
 We study Orlicz functions that do not satisfy the $\Delta_2$-condition at zero.
 We prove that for every Orlicz function $M$ such that $\limsup_{t\to0}M(t)/t^p >0$ for some $p\ge1$, there exists a positive sequence $T=(t_k)_{k=1}^\infty$ tending to zero and such that
 $$
   \sup_{k\in\mathbb{N}}\frac{M(ct_k)}{M(t_k)} <\infty,\text{ for all }c>1,
  $$
  that is, $M$ satisfies the $\Delta_2$ condition with respect to $T$.

 Consequently, we show that for each Orlicz function with lower Boyd index $\alpha_M < \infty$ there exists an Orlicz function $N$ such that:

 (a) there exists a positive sequence $T=(t_k)_{k=1}^\infty$ tending to zero such that $N$ satisfies the $\Delta_2$ condition with respect to $T$, and

 (b) the space $h_N$ is isomorphic to a  subspace of $h_M$ generated by one vector.

  We apply this result to find the maximal possible order of G\^ateaux differentiability of  a continuous bump function on the Orlicz space $h_M(\Gamma)$ for $\Gamma$ uncountable.
\end{abstract}

\bigskip\noindent
\textbf{2010 Math Subject Classification:  46B45, 46E30, 49J50}

\section{Introduction}
Since the seminal work of Bonic and Frampton \cite{BF} the exact smoothness of particular classes of Banach spaces has been studied, see e.g. \cite[Chapter 5]{HJ}, \cite[Chapter III.27]{GMZ}.
The exact order of G\^ateaux smoothness of Orlicz spaces $\ell_M(\Gamma)$ over an uncountable index set $\Gamma$ when $M$ satisfies the $\Delta_2$-condition at zero, was completely characterised by Maleev, Nedev and Zlatanov in \cite{MNZ} in terms of the lower Boyd index, e.g. \cite{LT1},
\begin{equation}
    \label{eq:alfa-def}
    \alpha_M := \sup\left\{p:\ \sup\left\{\frac{M(uv)}{u^pM(v)}:\ u,v\in(0,1]\right\} < \infty \right\}.
\end{equation}
From the convexity of the Orlicz function $M$ it follows that $1\le\alpha_M\le\infty$.

Here we show that the characterisation from \cite{MNZ} is still valid in $h_M(\Gamma)$ even without the $\Delta_2$-condition.
\begin{theorem}
    \label{thm:sharp}
    Let $M$ be an Orlicz function, let $\Gamma$ be an    uncountable set and let $p\in\mathbb{N}$. Assume that there exists continuous $p$-times G\^ateaux differentiable bump function on $h_M(\Gamma)$. Then at least one of the following conditions holds:
    \begin{itemize}
        \item[$\mathrm{(a)}$] $p < \alpha_M$;
        \item[$\mathrm{(b)}$] $p = \alpha_M$  and $\displaystyle \lim_{t\to0}M(t)/t^p=0$;
        \item[$\mathrm{(c)}$] $p$ is even and $M$ is equivalent to $t^p$ at zero. (Obviously in this case also $p = \alpha_M$.)
    \end{itemize}
\end{theorem}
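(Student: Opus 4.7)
My plan is to reduce Theorem \ref{thm:sharp} on $h_M(\Gamma)$ to the classical Maleev–Nedev–Zlatanov characterization \cite{MNZ}, which covers only the case of $\ell_M(\Gamma)$ with $M$ satisfying $\Delta_2$. The extra flexibility we need in the absence of $\Delta_2$ will come from the auxiliary construction announced in the abstract.

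First, dispose of the trivial case: if $\alpha_M = \infty$ then $p < \alpha_M$ holds automatically, so conclusion (a) is true and we are done. Hence assume $\alpha_M < \infty$. Invoke the consequence stated in the abstract: there exist an Orlicz function $N$ and a positive sequence $T = (t_k) \to 0$ along which $N$ satisfies $\Delta_2$, such that $h_N$ is isomorphic to the closed subspace of $h_M$ generated by the disjoint translates of a single vector. Since $\Gamma$ is uncountable, we may place continuum-many pairwise disjointly supported copies of that generating block inside $h_M(\Gamma)$, producing a linear isomorphic embedding $J : h_N(\Gamma) \hookrightarrow h_M(\Gamma)$.

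Next, pull back the bump: if $b$ is a continuous $p$-times G\^ateaux differentiable bump on $h_M(\Gamma)$, then $b \circ J$ is continuous, $p$-times G\^ateaux differentiable (because $J$ is linear and bounded, so it commutes with directional derivatives), and has bounded support (the preimage of a bounded set under the isomorphic embedding $J$ is bounded), while $(b\circ J)(0) = b(0) \ne 0$. Hence $h_N(\Gamma)$ admits a continuous $p$-times G\^ateaux differentiable bump. Now apply (a suitable adaptation of) the Maleev–Nedev–Zlatanov characterization to $N$; this is available precisely because elements of the embedded block subspace only probe $N$ along the sequence $T$, where $\Delta_2$ does hold, so the sequence-level $\Delta_2$ suffices to run the arguments of \cite{MNZ}. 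One obtains that one of the conditions (a), (b), (c) is satisfied by $N$ and $p$.

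Finally, one transfers each of the three conditions back from $N$ to $M$: the construction should be arranged so that $\alpha_N = \alpha_M$, so that $\lim_{t\to 0} N(t)/t^p = 0$ is equivalent to the same statement for $M$, and so that $N$ equivalent to $t^p$ at zero forces $M$ equivalent to $t^p$ at zero. The main obstacle is this last transfer, particularly case (c): showing $M \sim t^p$ globally near zero from the fact that $N \sim t^p$ and that $M$ and $N$ are tied together directly only along the thin sequence $T$. The first main theorem quoted in the abstract (the choice of $T$) must be used essentially here, since it produces a $T$ dense enough in the sense of ratios to capture the pointwise equivalence class of $M$ near zero, and not just its values along a sparse subset.
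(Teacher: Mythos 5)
Your plan has the right first moves (the reduction to a one-vector subspace, pulling the bump back through the embedding), but it defers the two genuinely hard steps and both of them are gaps, not routine adaptations. The central one is the sentence ``the sequence-level $\Delta_2$ suffices to run the arguments of \cite{MNZ}.'' It does not, at least not without a new proof: the argument of \cite{MNZ} rests on Stegall's variational principle, which requires the set being minimized over to be dentable; under full $\Delta_2$ this comes from the Radon--Nikod\'ym property of $\ell_M$ via bounded completeness of the unit vector basis. When $N$ satisfies $\Delta_2$ only along a sequence $T$, the space $h_N$ still contains a copy of $c_0$ and fails RNP, so Stegall cannot be applied on the ball. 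All that the relative $\Delta_2$ buys is bounded completeness \emph{restricted to} vectors with coordinates in $\pm T\cup\{0\}$ (Fact~\ref{fact:bound-complete}), and one must then redo the whole smoothness argument so that it only ever moves by increments $\pm t_k e_\gamma$ with $t_k\in T$. That is precisely the content of Proposition~\ref{prop:add}: a direct inductive construction (combining Lemma~\ref{lem:poly-alg} with a carefully chosen ``first usable index'' $m(x)$ and the relative bounded completeness to get a limit point $\bar x$ in the domain of the bump) replacing Stegall. This is the paper's main technical contribution and cannot be waved through as ``a suitable adaptation.''

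The second gap is the transfer of the trichotomy back from $N$ to $M$, which you yourself flag as the main obstacle; it is in fact not fixable along your lines, because the construction of $N(t)=\sigmam(tz)=\sum_n M(a_nt)$ gives no control forcing $\alpha_N=\alpha_M$, and $N$ being equivalent to $t^p$ at zero says essentially nothing about $M$ near zero (this is the same phenomenon as \cite[Example~2]{RS} cited in the introduction, where $h_M$ has one-vector subspaces quite unlike $M$ itself). The paper sidesteps this entirely by a different logical architecture: the subspace $h_N$ is invoked \emph{only} in the case $p>\alpha_M$, and there Lemma~\ref{lem:one-vector} arranges $\limsup_{t\to0}N(t)/t^p=\infty$, so that \emph{both} alternatives of the necessary condition (Proposition~\ref{prop:main}) fail for $N$, yielding an outright contradiction with the existence of the pulled-back bump; the conclusion is $p\le\alpha_M$, and the cases $p=\alpha_M$ are then read off by applying Proposition~\ref{prop:main} directly to $M$ itself, so no transfer from $N$ to $M$ is ever needed. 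You should restructure your proof this way. (Two minor points: the pulled-back bump must be translated so that it is nonzero somewhere on the embedded subspace --- $(b\circ J)(0)=b(0)$ need not be nonzero; and the case $\alpha_M=\infty$ is indeed trivially (a), as you note.)
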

Theorem~\ref{thm:sharp} is sharp. In case (a) there even exists  equivalent $p$-times Ft\'echet differentiable norm on $h_M(\Gamma)$, see \cite[Corollary~12]{MT}. Case (b) is a modification of \cite[Lemma 2 and Lemma 4]{MT}, see also \cite[Remark~1]{MNZ}.
 Of course, in the  (c) case $h_M(\Gamma)$ is just $\ell_p(\Gamma)$ which for an even $p$ is infinitely smooth.

The proof of the exact order of G\^ateaux smoothness of Orlicz spaces $\ell_M(\Gamma)$ over an uncountable index set $\Gamma$ when $M$ satisfies the $\Delta_2$-condition at zero due to \cite[Corollary~1]{MNZ}, relies on Stegall Variational Principle \cite{stegal1,stegal2}. The Stegall Variational Principle applies when the domain of the minimized function is a dentable set. When $M$ satisfies the $\Delta_2$-condition at zero the authors of \cite{MNZ} use that the unit vectors form a boundedly complete basis of $\ell_M$ and, therefore, $\ell_M$ has the Radon-Nikod\'ym property. If, however, $M$ fails the $\Delta_2$-condition at zero then $h_M$ contains a subspace isomorphic to $c_0$ and, therefore, $h_M$ fails also the Radon-Nikod\'ym property and Stegall Variational Principle cannot be applied directly.

We will briefly explain the new technique we use to circumvent the lack of $\Delta_2$-condition in Theorem~\ref{thm:sharp}.

Elaborating on an idea of Leung \cite{leung} we show that there is a positive sequence $(t_k)_{k=1}^\infty$ tending to zero such that $M$ satisfies a kind of $\Delta_2$-condition relative to this sequence. More precisely:
\begin{definition}
  \label{dfn:relative-del-2}
  Let $M$ be a non-degenerate Orlicz function and let $T=(t_k)_{k=1}^\infty$ be a  positive sequence tending to $0$. We say that $M$ satisfies the $\Delta_2$-condition \emph{with respect to} $T$ if
  \begin{equation}
    \label{eq:relative-del-2}
    \sup_{t\in T}\frac{M(ct)}{M(t)} < \infty
  \end{equation}
  for all $c>1$.
\end{definition}
The merit of the concept defined above is that if $M$ satisfies the $\Delta_2$-condition with respect to a sequence $T$ then the unit vector basis $(e_\gamma)_{\gamma\in\Gamma}$ in $h_M(\Gamma)$ is boundedly complete with respect to this $T$. More precisely, let $T=(t_k)_{k=1}^\infty$ be a  positive sequence  tending to~$0$ and let
$\left(\{\pm T\}\cup\{0\}\right)^\Gamma$ be the set of all functions $x=x(\gamma)$ defined on $\Gamma$ with values in $\{\pm T\}\cup\{0\}$. In other words,
\begin{equation}
  \label{eq:T-Gamma-def}
  x \in  \left(\{\pm T\}\cup\{0\}\right)^\Gamma \iff |x(\gamma)|\in T\cup\{0\},\ \forall \gamma\in\Gamma.
\end{equation}
\begin{fact}
  \label{fact:bound-complete}
  Let $T=(t_k)_{k=1}^\infty$ be a  positive sequence tending to $0$ and let $M$ satisfy the $\Delta_2$-condition with respect to $T$. Let $(e_\gamma)_{\gamma\in\Gamma}$ be the unit vector basis of $h_M(\Gamma)$. Let $x \in  \left(\{\pm T\}\cup\{0\}\right)^\Gamma$.
  Assume that for any finite subset $A$ of $\Gamma$
  \begin{equation}
    \label{eq:bound-complete}
    \left\| \sum_{\gamma\in A} x(\gamma) e_\gamma  \right\| \le 1.
  \end{equation}
  Then $x\in h_M(\Gamma)$.
\end{fact}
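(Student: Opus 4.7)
The plan is to verify $x\in h_M(\Gamma)$ directly from the standard characterization that a vector $y$ lies in $h_M(\Gamma)$ precisely when $\sum_\gamma M(c|y(\gamma)|)<\infty$ for every $c>0$, and the $\Delta_2$-condition with respect to $T$ from Definition~\ref{dfn:relative-del-2} is exactly what is needed to produce this for every $c>0$ once it is known for $c=1$.

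First I would observe that the hypothesis (\ref{eq:bound-complete}), combined with the standard identification of the Luxemburg-norm unit ball, gives for every finite $A\subset\Gamma$
$$
  \sum_{\gamma\in A} M(|x(\gamma)|) \le 1,
$$
and hence, passing to the supremum over finite $A\subset\Gamma$,
$$
  \sum_{\gamma\in\Gamma} M(|x(\gamma)|) \le 1.
$$
In particular the support of $x$ is at most countable and $x\in\ell_M(\Gamma)$. It remains to promote this to $x\in h_M(\Gamma)$, i.e.\ to show $\sum_\gamma M(c|x(\gamma)|)<\infty$ for every $c>0$. For $c\in(0,1]$ this is immediate from the monotonicity of $M$. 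For $c>1$, Definition~\ref{dfn:relative-del-2} supplies a constant $K_c=\sup_{t\in T} M(ct)/M(t)<\infty$, and since (\ref{eq:T-Gamma-def}) forces $|x(\gamma)|\in T\cup\{0\}$ for every $\gamma\in\Gamma$, one obtains
$$
  \sum_{\gamma\in\Gamma} M(c|x(\gamma)|) \le K_c \sum_{\gamma\in\Gamma} M(|x(\gamma)|) \le K_c < \infty.
$$
Equivalently, writing $c=1/\varepsilon$ for $\varepsilon\in(0,1)$ and choosing a finite $A\subset\Gamma$ so that the tail $\sum_{\gamma\notin A} M(|x(\gamma)|/\varepsilon)$ is at most $1$ (which is possible because the whole series converges), one gets $\bigl\|x-\sum_{\gamma\in A} x(\gamma) e_\gamma\bigr\| \le \varepsilon$, so $x$ is approximable in norm by finitely supported vectors.

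There is no real obstacle in this proof: Definition~\ref{dfn:relative-del-2} was framed precisely so that the classical argument showing the unit vector basis of $\ell_M(\Gamma)$ is boundedly complete under the full $\Delta_2$-condition survives verbatim when attention is restricted to the values $\pm T\cup\{0\}$. The only step warranting a word of care is the translation of the norm inequality (\ref{eq:bound-complete}) into the modular inequality $\sum_{\gamma\in A}M(|x(\gamma)|)\le 1$, which is the standard description of the closed unit ball of the Luxemburg norm for an Orlicz function.
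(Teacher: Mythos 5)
Your proof is correct and follows essentially the same route as the paper's: pass from the norm bound \eqref{eq:bound-complete} to the modular bound $\sigmam(x)\le 1$, then use the constant $\sup_{t\in T}M(ct)/M(t)<\infty$ from Definition~\ref{dfn:relative-del-2} together with $|x(\gamma)|\in T\cup\{0\}$ to bound $\sigmam(cx)$ for every $c>1$, which is exactly the definition of membership in $h_M(\Gamma)$. The paper phrases this with integer multiples $i\in\mathbb{N}$ rather than arbitrary $c>1$, but that is an immaterial difference.
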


Indeed,  \eqref{eq:bound-complete} is equivalent to $\sigmam(x)\le 1$.
  Let $s_i := \sup_{t\in T} M(it)/M(t)$, where $i\in\mathbb{N}$. From \eqref{eq:relative-del-2} it follows that $s_i < \infty$. So, $M(it_k) \le s_iM(t_k)$ and, therefore,
  $
    \sigmam (ix) \le s_i \sigmam (x) \le s_i < \infty,
  $
  for all $i\in\mathbb{N}$. This means that $x\in h_M(\Gamma)$.

Our new insight is recorded in the following theorem.
\begin{theorem}
    \label{thm:t-seq}
    Let the Orlicz function $M$ be such that
    \begin{equation}
      \label{eq:t-seq-cond}
      \limsup_{t\to0}\frac{M(t)}{t^p} > 0
    \end{equation}
    for some $p\ge 1$.
    Then there are constants $a>0$ and a positive sequence $T=(t_k)_{k=1}^\infty$ tending to $0$ such that $M$ satisfies the $\Delta_2$-condition with respect to $T$ and
    \begin{equation}
        \label{eq:t-seq-lim}
        \liminf_{k\to\infty}\frac{M(t_k)}{t_k^p} \ge a\limsup_{t\to0}\frac{M(t)}{t^p}.
    \end{equation}
\end{theorem}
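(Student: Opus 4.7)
The plan is to set $h(t) = M(t)/t^p$ and $L = \limsup_{t \to 0^+} h(t) \in (0, \infty]$, and treat the finite and infinite cases separately; the constant $a = 1/2$ will work in both.

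When $L < \infty$ the construction is routine: by the definition of $\limsup$ one can pick $t_k \to 0$ with $h(t_k) \ge L/2$, and find $\delta > 0$ with $h(t) \le L + 1$ on $(0,\delta]$. For each $c > 1$ and all $k$ large enough that $c t_k \le \delta$,
$$
\frac{M(c t_k)}{M(t_k)} \;\le\; \frac{(L+1)(c t_k)^p}{(L/2) t_k^p} \;=\; \frac{2(L+1) c^p}{L},
$$
so $\sup_k M(c t_k)/M(t_k) < \infty$ for every $c > 1$, and the \emph{liminf} inequality holds with $a = 1/2$.

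The main work lies in the case $L = \infty$, where $h$ is unbounded near $0$, so a naive sequence on which $h(t_k)$ is merely large can be sabotaged by even taller spikes of $h$ at neighboring points, blowing up $M(ct_k)/M(t_k)$. My plan is to dyadically decompose the small values of $t$ and extract a ``record'' subsequence. Set $I_m = [2^{-m-1}, 2^{-m}]$ and $\mu_m = \sup_{t \in I_m} h(t)$; each $\mu_m$ is finite and positive by continuity of $h$ on the compact $I_m$. Since $\limsup_m \mu_m = L = \infty$, the sequence $(\mu_m)$ is unbounded; its running maxima tend to infinity, yielding infinitely many indices $m_1 < m_2 < \cdots$ with $\mu_{m_k} > \mu_m$ for every $m < m_k$. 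For each $k$ pick $t_k \in I_{m_k}$ with $h(t_k) \ge \mu_{m_k}/2$. Then $h(t_k) \ge \mu_{m_k}/2 \to \infty$, handling the \emph{liminf} requirement (vacuously, since $L = \infty$).

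Verifying the $\Delta_2$-condition is then a short calculation. Fix $c > 1$. For all $k$ large enough, the point $c t_k$ lies in some $I_{m_k - j}$ with $0 \le j \le \lceil \log_2 c \rceil$. If $j = 0$ then $h(c t_k) \le \mu_{m_k}$; if $j \ge 1$ the record property gives $\mu_{m_k - j} < \mu_{m_k}$. Either way $M(c t_k) \le \mu_{m_k} (c t_k)^p$, while $M(t_k) \ge (\mu_{m_k}/2) t_k^p$, hence $M(c t_k)/M(t_k) \le 2 c^p$, independent of $k$. The main obstacle is conceptual: choosing the right selection principle for $t_k$. The record construction is precisely what forces each $t_k$ to sit at a dyadic-scale local maximum of $h$, so that shifting to the larger scale $c t_k$ necessarily lands in an interval on which $\mu$ is strictly dominated by $\mu_{m_k}$, yielding the clean bound $2c^p$ that is uniform in $k$.
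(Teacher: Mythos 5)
Your proposal is correct and follows essentially the same strategy as the paper: in both the bounded and unbounded cases you select $t_k$ as (near-)maximizers of $\varphi(t)=M(t)/t^p$ relative to all larger $t$, which yields $\varphi(ct_k)\le s\,\varphi(t_k)$ and hence $M(ct_k)\le s\,c^pM(t_k)$ uniformly in $k$ — exactly the paper's key estimate \eqref{eq:M-le-tk-prim}. Your dyadic ``record'' selection is a slightly more elaborate route to that property than the paper's direct choice of $t_k$ as the maximizer of $\varphi$ on $[k^{-1},1]$, but the argument is sound.
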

In Section~\ref{sec:reduce} we show that if $p>\alpha_M$ then there is a subspace of $h_M(\Gamma)$ generated by one vector and isometric to $h_N(\Gamma)$, where $N$ is an Orlicz function such that $\limsup_{t\to0}N(t)/t^p = \infty$. Thus, \eqref{eq:t-seq-cond} holds for $N$. So, $N$ satisfies the $\Delta_2$-condition with respect to a suitable $T$. Leung \cite{leung} has provided an example of an Orlicz function $M$ with $\alpha_M = \infty$ for which there is a sequence $T$ such that  $M$ satisfies the $\Delta_2$-condition with respect to $T$. Of course, if $M(2t)/M(t) \to \infty$ as $t\to0$, such $T$ does not exist.

The problem of finding  spaces $h_N$  isomorphic to subspaces of a given space $h_M$ generated by
one vector,  and the problem of finding spaces $h_N$
just isomorphic to subspaces of  $h_M$, are quite different.
For a given Orlicz function $M$, those Orlicz functions $N$ for which $h_N$ is a
subspace of $h_M$ are characterized in \cite[Theorem~4.a.8]{LT1}. This condition is
effective. In very precise terms (non empty closed interval) are characterised
those $\ell_p$ which are isomorphic to a subspace of $h_M$ \cite[Theorem~4.a.9]{LT1}. The
characterisation \cite[Proposition~5]{HT}  of those Orlicz functions $N$ for which
$h_N$ is isomorphic to a subspace of $h_M$ generated by one vector, is harder to
apply, see \cite[Proposition~7]{HT} and \cite[Theorem~B]{HRS}. However, it is
relatively easy to give an example of an Orlicz function $M$ such that $h_M$ has no subspaces generated by one vector isomorphic to $\ell_p$ for any
$p$. For instance, if
$$
  M(t) = \left(\frac{t}{\log t}\right)^2,\quad\mbox{ for }t\in(0,t_0),
$$
for some suitable $t_0>0$ and extended in a convex fashion for $t>t_0$, then $h_M$ has no subspaces generated by one vector isomorphic to $\ell_p$ for any $p\ge1$, see \cite[Example~2]{RS}.

The way the sequence $T$ provided by Theorem~\ref{thm:t-seq} can be used is as follows. If $\alpha_M>1$ then the unit vector basis $(e_\gamma)_{\gamma\in\Gamma}$ of $h_M(\Gamma)$ is shrinking, i.e. the conjugate system $(e^*_\gamma)_{\gamma\in\Gamma}$ generates the dual $h_M(\Gamma)^*$. Thus the intersection of the unit ball of $h_M(\Gamma)$ with the set $\left(\{\pm T\}\cup\{0\}\right)^\Gamma $, see \eqref{eq:T-Gamma-def}, is weekly compact, cf. \cite[Section 3, Examples]{FST}. Therefore, the latter intersection is dentable and Stegall Variational Principle can be used on it as in \cite{HT,MNZ}. However, we prefer to use direct induction of the type originating from~\cite{BF} and elaborated in \cite{MT}. Note that another way for completing this step is the Ekeland Variational Principle with respect to an appropriate metric as in \cite{Stankov} and restricted to $\left(\{\pm T\}\cup\{0\}\right)^\Gamma $, because without $\Delta_2$ that metric is not complete on the whole of $h_M(\Gamma)$. The special perturbation method from \cite{Topalova-Zlateva} should also work if restricted in the same way.

It is worth mentioning here that the exact order of G\^ateaux smoothness of $\ell_p(\mathbb{N})$ for $p>2$ is not known and it is an interesting and apparently tough open problem. Therefore, it is not possible at present to derive Theorem~\ref{thm:sharp} from the countable case.

The paper is organised as follows. After Section~\ref{sec:prelim} of preliminaries, we prove in Section~\ref{sec:nc} a necessary condition for smoothness which, when applied to a suitable subspace, provided by Section~\ref{sec:reduce}, yields Theorem~\ref{thm:sharp}.

\section{Preliminaries}\label{sec:prelim}

    Here we recall some definitions and known facts that we use.

    On a Banach space $X$ the function $f:X\to\mathbb{R}$ is $p$-times G\^ateaux differentiable, $p\in\mathbb{N}$, if
    $$
      f(x+ ty) = f(x) + P(ty) + o_y(t^p),
    $$
    where $P$ is a  bounded polinomial of degree at most $p$, such that $P(0)=0$,  and $o_y(t^p)$ means any function $\omega :X\times\mathbb{R}\to\mathbb{R}$ such that
    $$
        \lim _{t\to0}{\omega(y,t)\over t^p} = 0,\quad\forall y\in X,
    $$
  for details see~\cite[Section~1.4]{HJ}.

    The function $b:X\to\mathbb{R}$ is called {\em bump} if its support set
    $$
        \supp b := \{x:\ b(x)\neq0\}
    $$
    is bounded. Here we assume non-trivial bumps, that is, such with non-empty support.

    A non-degenerate Orlicz function $M$, that is,  an even convex function on $\mathbb{R}$ such that $M(0) = 0$ and $M(t)>0$ for $t\neq 0$, is always assumed throughout the paper. Recall, e.g. \cite{LT1}, that $M$ satisfies the $\Delta_2$-condition at zero if
    $$
    \limsup_{t\to0}\frac{M(2t)}{M(t)} < \infty,
    $$
    which is equivalent to
    $$
      \sup_{t\in(0,1]}\frac{M(ct)}{M(t)} < \infty,\text{ for all } c>1.
    $$

    The Orlicz space $\ell_M(\Gamma)$ is the set of all bounded real-valued functions on a set $\Gamma$ such that $\sigmam(x/\lambda) < \infty$ for some $\lambda>0$, where
    $$
        \sigmam(x) := \sum_{\gamma\in\Gamma} M(x(\gamma)).
    $$
    The summation is in the sense of the supremum over the sums on all finite subsets of~$\Gamma$.

    We denote for $x\in \ell_M(\Gamma)$
    $$
        \supp x := \{ \gamma\in\Gamma:\ x(\gamma)\neq 0\}.
    $$
    It is clear that $\supp x$ is countable for each $x\in\ell_M(\Gamma)$.

    The Luxembourg norm of the Orlicz space $\ell_M(\Gamma)$ is given by the Minkowski functional of the set $\{x\in\ell_M(\Gamma):\ \sigmam(x) \le 1\}$, that is,
    $$
        \|x\| := \inf \{\lambda > 0:\ \sigmam(x/\lambda) \le 1\}.
    $$
    Then $\ell_M(\Gamma)$ is a Banach space. We are interested in its closed subspace
    $$
        h_M(\Gamma) := \{x\in \ell_M(\Gamma):\ \sigmam(x/\lambda) < \infty,\ \forall \lambda > 0\}.
    $$
    It is easy to see that $h_M(\Gamma)$ is the subspace of $\ell_M(\Gamma)$ generated by the unit vector basis $(e_\gamma)_{\gamma\in\Gamma}$. Clearly, $(e_\gamma)_{\gamma\in\Gamma}$ is a symmetric basis in $h_M(\Gamma)$. With $(e_\gamma^*)_{\gamma\in\Gamma}$ we denote the conjugate system to the basis $(e_\gamma)_{\gamma\in\Gamma}$, that is, $e_\gamma^*(x) = x(\gamma)$.

It follows directly from the definition of the canonical norm in $\ell _M$ that for a sequence $(x_n)_{n=1}^\infty \subset \ell _M$
\begin{equation}
    \label{eq:i-sigmas}
  \lim_{n\to\infty} \|x_n\| = 0 \iff
  \lim_{n\to\infty} \sigmam(ix_n) = 0,\quad\forall i\in\mathbb{N}.
\end{equation}
Indeed, $\sigmam(ix) \le 1 \iff \|ix\| \le 1 \iff \|x\| \le 1/i$. On the other hand, $\|x\| \le \varepsilon/i\iff \sigmam (ix/\varepsilon)\le 1$. From convexity $\sigmam(\varepsilon y) \le \varepsilon\sigmam(y)$ for $\varepsilon\in[0,1]$, so $\sigmam(ix) \le \varepsilon \sigmam (ix/\varepsilon) \le \varepsilon$.

Note that if $M$ satisfies the $\Delta_2$-condition at zero, then $x_n\to0\iff \sigmam(x_n)\to0$. This is the main technical obstacle we here overcome.

An important part of the proof of Theorem~\ref{thm:sharp} is a reduction to a subspace. In our case due to lack of $\Delta_2$-condition this reduction is a bit more technical than in \cite{MNZ}. We do it Section~\ref{sec:reduce}, but here we will introduce the needed concepts. The following definition is well-known, see e.g. \cite{LT1}.
\begin{definition}
  Let $X$ be a Banach space with symmetric basis $(u_\gamma)_{\gamma\in\Gamma}$. Let $A$ be an index set and let for each $\alpha\in A$ be given a sequence $(\alpha_n)_{n=1}^\infty$ of elements of~$\Gamma$. Let also these sequences be disjoint, i.e. $\{\alpha_n:\ n\in\mathbb{N}\}\cap\{\beta_n:\ n\in\mathbb{N}\} = \emptyset$ if $\alpha\neq\beta$.

  Then for any $z\in X\setminus\{0\}$ represented as
  $$
   z = \sum_{n=1}^\infty a_n u_{\gamma_n},
  $$
  where $\gamma_n\neq\gamma_m$ for $n\neq m$, the basic set $(v_\alpha^{(z)})_{\alpha\in A}$ defined by
  $$
    v_\alpha^{(z)} = \sum_{n=1}^\infty a_nu_{\alpha_n}
  $$
  is called a \emph{block basis generated by the vector} $z$ and the subspace spanned by this block basis is called \emph{subspace generated by} $z$.
\end{definition}
The work \cite{HT} characterised the Orlicz subspaces of $h_M(\Gamma)$.
\begin{proposition}\cite[Proposition 5]{HT}
    Let $M$ and $N$ be Orlicz functions and let $\Gamma$ be an uncountable set. Then $h_M(\Gamma)$ contains a subspace $Z$ isomorphic to $h_N(\Gamma)$ if and only if there exists a vector $\displaystyle z = \sum_{n=1}^\infty a_ne_{\gamma_n} \in h_M(\Gamma)$ such that the corresponding block basis is a basis of $Z$ and $N=N(t)$ is equivalent at zero to
    $$
        Q(t) := \sum_{n=1}^\infty M(a_nt).
    $$
\end{proposition}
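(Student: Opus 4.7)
For the \emph{sufficiency}, assume $z=\sum_n a_n e_{\gamma_n}\in h_M(\Gamma)$ and $N\sim Q$ at zero. Since $\Gamma$ is uncountable, partition it into pairwise disjoint countable sequences $(\alpha_n)_{n=1}^\infty$ indexed by $\alpha$ in a set $A$ with $|A|=|\Gamma|$, set $v_\alpha:=\sum_n a_n e_{\alpha_n}$, and for any finitely supported $x=\sum_\alpha c_\alpha e_\alpha$ and $\lambda>0$ compute
\[
\sigmam\!\left(\lambda^{-1}\sum_\alpha c_\alpha v_\alpha\right)=\sum_\alpha\sum_n M(\lambda^{-1}c_\alpha a_n)=\sum_\alpha Q(\lambda^{-1}c_\alpha).
\]
The equivalence $N\sim Q$ at zero then yields $\|\sum_\alpha c_\alpha v_\alpha\|_{h_M(\Gamma)}\asymp\|\sum_\alpha c_\alpha e_\alpha\|_{h_N(A)}$, so $e_\alpha\mapsto v_\alpha$ extends to the desired isomorphism of $h_N(\Gamma)$ onto the block subspace $Z$.

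For the \emph{necessity}, let $T\colon h_N(\Gamma)\to Z\subseteq h_M(\Gamma)$ be an isomorphism and set $y_\alpha:=T(e_\alpha)$, so $(y_\alpha)_{\alpha\in\Gamma}$ is $K$-equivalent to the symmetric basis of $h_N(\Gamma)$. Each $y_\alpha$ has countable support $S_\alpha\subseteq\Gamma$. The first step is a \emph{disjointification}: apply a $\Delta$-system argument to the uncountable family $\{S_\alpha\}_{\alpha\in\Gamma}$ of countable sets to obtain an uncountable $\Gamma_0\subseteq\Gamma$ whose supports form a $\Delta$-system with common root $R$. The restrictions $y_\alpha|_R$ lie in the separable space $h_M(R)$, so by a further uncountable pigeonhole I can pass to $\Gamma_1\subseteq\Gamma_0$ on which $y_\alpha|_R$ are uniformly close; subtracting the common limit (and re-normalizing by a bounded factor) produces vectors $\tilde y_\alpha$ with pairwise disjoint supports that are still equivalent to the unit basis of $h_N$.

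The second step is a \emph{pigeonhole on rearrangements}. Let $a_1^\alpha\ge a_2^\alpha\ge\cdots$ be the decreasing rearrangement of $|\tilde y_\alpha|$. Because the Orlicz modular of $\tilde y_\alpha$ depends only on this rearrangement, and because the relevant modular data (the values $\sigmam(\lambda^{-1}\tilde y_\alpha)$ for $\lambda$ in a dense set of positive rationals together with tail bounds) can be coded into a single element of a separable metric space, approximating by a fixed countable dense family yields an uncountable $\Gamma_2\subseteq\Gamma_1$ and a single decreasing sequence $(a_n)$ with $\sum_n M(\delta a_n)<\infty$ for all $\delta>0$ such that each $\tilde y_\alpha$ is a norm-perturbation of the block vector $\sum_n a_n e_{\gamma_n^\alpha}$. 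Setting $z:=\sum_n a_n e_{\gamma_n}$ for a fixed enumeration then delivers a vector of $h_M(\Gamma)$ whose block basis is (up to small perturbations) equivalent to $(\tilde y_\alpha)_{\alpha\in\Gamma_2}$ and hence to the unit basis of $h_N(\Gamma)$. Inserting the identity of the sufficiency computation, namely that the norm of $\sum c_\alpha v_\alpha$ in $h_M$ is the Luxemburg norm associated with $Q(t)=\sum_n M(a_n t)$, and comparing with the $K$-equivalence to $h_N$, forces $N\sim Q$ at zero.

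The main obstacle is the second, pigeonhole step: without $\Delta_2$ the norm of an Orlicz vector is not determined by any single $\sigmam$-value but requires control at every dilation $\lambda^{-1}$, so matching sequences $(a_n^\alpha)$ across an uncountable family means simultaneously matching a continuum's worth of modular data. The uncountability of $\Gamma$ is precisely what lets one iterate countable extractions enough times to collapse this continuum of profiles to a single common $(a_n)$, which is why the hypothesis on $\Gamma$ cannot be weakened.
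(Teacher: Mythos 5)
Your \emph{sufficiency} argument is essentially the paper's own: it is the computation of Lemma~\ref{lem:subspace}, where $h_M(\Gamma)$ is identified with $h_M(\Gamma\times\mathbb{N})$ and the modular of $\sum_\alpha c_\alpha v_\alpha$ is evaluated as $\sum_\alpha Q(c_\alpha)$ with $Q(t)=\sigmam(tz)$. Note that this is the \emph{only} part the paper proves; the converse is quoted from \cite{HT}, so your necessity sketch must stand on its own — and there it has genuine gaps.

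First, the disjointification step as stated would fail. The $\Delta$-system lemma does not hold for an uncountable family of \emph{countable} sets: already the initial segments $\{[0,\alpha):\alpha<\omega_1\}$ of $\omega_1$ admit no three-element $\Delta$-subsystem with distinct members, so no uncountable one either. The standard repair is to first replace each $y_\alpha$ by a finitely supported approximant $y_\alpha'$ with $\|y_\alpha-y_\alpha'\|$ summably small (small enough to preserve equivalence of basic sets), and only then apply the $\Delta$-system lemma to the \emph{finite} supports; your sketch skips this reduction, and without $\Delta_2$ even the approximation-by-finitely-supported-vectors step needs care because convergence of $\sigmam$ does not control the norm. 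Second, and more seriously, the ``pigeonhole on rearrangements'' step is asserted, not proved. You claim the modular profiles $\lambda\mapsto\sigmam(\lambda^{-1}\tilde y_\alpha)$ can be ``coded into a single element of a separable metric space'' so that uncountably many $\tilde y_\alpha$ share one sequence $(a_n)$ up to small norm perturbation. That statement is precisely the mathematical content of \cite[Proposition~5]{HT}, which rests on the representation theorem for uncountable symmetric basic sets of \cite{HT-1}; nothing in your sketch explains why the relevant profile space is separable, nor how to make the perturbations uniformly small across an uncountable subfamily so that equivalence to the unit vector basis of $h_N$ survives. As written, the necessity direction reduces the proposition to an unproved claim that is at least as strong as the proposition itself.
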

Here for the sake of completeness of the presentation we will prove one basic and easy part of the above characterisation.
    \begin{lemma}
  \label{lem:subspace}
  Let $z\in h_M(\Gamma)$, $z\neq0$, and let
  $$
    N(t) := \sigmam(tz).
  $$
  Then $h_N(\Gamma)$ is isometric to a  subspace of $h_M(\Gamma)$ generated by $z$.
\end{lemma}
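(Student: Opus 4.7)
The plan is to construct the isometric embedding explicitly by spreading the mass of each unit vector of $h_N(\Gamma)$ along a disjoint copy of the support pattern of $z$. Write $z = \sum_{n=1}^\infty a_n e_{\gamma_n}$ with the $\gamma_n$ distinct; then $N(t)=\sum_n M(ta_n)$, and I would first verify quickly that $N$ is a bona fide non-degenerate Orlicz function: convexity and evenness pass to infinite sums, $N(0)=0$, $N(t)>0$ because some $a_n\neq 0$, and $N(t)<\infty$ for every $t$ precisely because $z\in h_M(\Gamma)$ so $\sigma_M(tz)<\infty$ for all $t>0$.

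Next, since $\Gamma$ is infinite we have $|\Gamma|=|\Gamma\times\mathbb{N}|$, so I can fix a family of pairwise disjoint sequences $(\gamma_n^\alpha)_{n=1}^\infty$, one for each $\alpha\in\Gamma$, consisting of elements of $\Gamma$. Set
$$
   v_\alpha := \sum_{n=1}^\infty a_n e_{\gamma_n^\alpha}\in h_M(\Gamma),
$$
which is the block basis generated by $z$ associated to the enumeration $\alpha\mapsto(\gamma_n^\alpha)$. Define a linear map $T:h_N(\Gamma)\to h_M(\Gamma)$ initially on finitely supported vectors by $Tu_\alpha := v_\alpha$, where $(u_\alpha)_{\alpha\in\Gamma}$ is the unit vector basis of $h_N(\Gamma)$.

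The core computation is then a single application of disjointness of supports. For a finitely supported $x=\sum_{\alpha\in A}x(\alpha)u_\alpha$ and every $\lambda>0$,
$$
  \sigman\!\left(\frac{x}{\lambda}\right)=\sum_{\alpha\in A}N\!\left(\frac{x(\alpha)}{\lambda}\right)=\sum_{\alpha\in A}\sum_{n=1}^\infty M\!\left(\frac{a_n x(\alpha)}{\lambda}\right),
$$
while, because the points $\{\gamma_n^\alpha:\alpha\in A,n\in\mathbb{N}\}$ are all distinct,
$$
   \sigmam\!\left(\frac{Tx}{\lambda}\right)=\sum_{\alpha\in A}\sum_{n=1}^\infty M\!\left(\frac{a_nx(\alpha)}{\lambda}\right).
$$
The two expressions coincide, so by the definition of the Luxemburg norm $\|x\|_N=\|Tx\|_M$, i.e.\ $T$ is an isometry on the dense subspace of finitely supported vectors.

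It remains to extend $T$ to all of $h_N(\Gamma)$ and to verify that the image still sits in $h_M(\Gamma)$. Extension by uniform continuity produces an isometric linear map from $h_N(\Gamma)$ into $\ell_M(\Gamma)$; since the image of the dense set consists of vectors in $h_M(\Gamma)$ and $h_M(\Gamma)$ is closed in $\ell_M(\Gamma)$, the extended $T$ takes values in $h_M(\Gamma)$. By construction the image is exactly the closed linear span of the block basis $(v_\alpha)_{\alpha\in\Gamma}$, which is a subspace of $h_M(\Gamma)$ generated by $z$. The only mild subtlety, which I would flag but not labour, is the density of finitely supported vectors in $h_N(\Gamma)$ (a standard consequence of the definition of $h_N$ via~\eqref{eq:i-sigmas}); beyond this the argument is a bookkeeping exercise in disjoint supports.
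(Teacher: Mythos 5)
Your proof is correct and follows essentially the same route as the paper's: the paper identifies $h_M(\Gamma)$ with $h_M(\Gamma\times\mathbb{N})$ and defines $(Ux)(\gamma,n)=x(\gamma)z(\gamma_n)$, which is exactly your map $T$ written in coordinates, and then verifies the modular identity $\sigmam(Ux)=\sigman(x)$. The only presentational difference is that the paper computes the modular equality directly for arbitrary $x\in h_N(\Gamma)$ (legitimate since all terms are non-negative), whereas you work on finitely supported vectors and extend by density; both are fine.
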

\begin{proof}
    Let
    $$
        \mathrm{supp}\, z \subset \{\gamma_n:\ n\in\mathbb{N}\}.
    $$

We identify $h_M(\Gamma)$ with $h_M(\Gamma\times \mathbb{N})$ and consider
$$
  U: h_N(\Gamma) \to h_M(\Gamma\times \mathbb{N})
$$
defined by
$$
  (Ux)(\gamma,n) = x(\gamma) z(\gamma_n).
$$
Then
\begin{eqnarray*}
  \sigmam (Ux) &=& \sum_{\gamma \in \Gamma}\sum _{n\in \mathbb{N}} M(x(\gamma) z(\gamma_n))\\
  &=& \sum_{\gamma \in \Gamma} \sigmam (x(\gamma) z) \\
  &=& \sum_{\gamma \in \Gamma} N (x(\gamma)) \\
  &=& \sigman (x).
\end{eqnarray*}
\end{proof}

We also recall the variant of Bonic-Frampton Lemma \cite{BF} (see also \cite[Corollary~3.59]{HJ}) we need.

\begin{lemma}
    \label{lem:poly-alg}
    Let $n\in\mathbb{N}$ and let
    \begin{equation}
        \label{eq:Mn-zero}
        \liminf_{t\to0}\frac{M(t)}{t^{n-1}} = 0.
    \end{equation}
    Then for any bounded polynomial $P$ on $h_M$ of degree
    $$
        \mathrm{deg}\, P < n,
    $$
    such that $P(0) = 0$ there is at most countable $A\subset \Gamma$ such that if $\gamma\not\in A $ then $P(te_\gamma)=0$ for all $t\in\mathbb{R}$.
\end{lemma}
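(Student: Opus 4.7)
The plan is to decompose $P = P_1 + \cdots + P_d$ into its homogeneous parts (with $d := \deg P \le n-1$, and no constant term since $P(0)=0$), so that $P(te_\gamma) = \sum_{k=1}^d P_k(e_\gamma)\,t^k$. This polynomial in $t$ vanishes identically iff $P_k(e_\gamma) = 0$ for every $k$, so it suffices to show that each $A_k := \{\gamma : P_k(e_\gamma) \ne 0\}$ is countable; then I take $A := A_1 \cup \cdots \cup A_d$.

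Fix $k$ and suppose for contradiction that $A_k$ is uncountable. By pigeonhole, and replacing $P_k$ by $-P_k$ if necessary, I extract distinct $\gamma_1, \gamma_2, \ldots$ and $\varepsilon > 0$ with $P_k(e_{\gamma_j}) \ge \varepsilon$ for all $j$. The device I will use to extract the diagonal sum $\sum_{j=1}^m P_k(e_{\gamma_j})$ from an actual polynomial evaluation is averaging over random $k$-th roots of unity. Let $\eta_1, \eta_2, \ldots$ be i.i.d.\ uniform on the group of $k$-th roots of unity in $\mathbb{C}$, so that $\mathbb{E}[\eta^\ell] = 1$ when $k \mid \ell$ and $0$ otherwise. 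Working in the complexification of $h_M(\Gamma)$, I consider $z := r \sum_{j=1}^m \eta_j e_{\gamma_j}$ with $r > 0$ and $m \in \mathbb{N}$ to be chosen. Expanding the complex-linear extension $\widetilde P_k(z)$ via the symmetric $k$-linear form $\widehat P_k$ associated to $P_k$ and taking expectation, only tuples $(j_1,\ldots,j_k)$ in which every coordinate-multiplicity is divisible by $k$ survive; since these multiplicities sum to $k$, only the constant tuples $(j,\ldots,j)$ remain, giving
$$\mathbb{E}\bigl[\widetilde P_k(z)\bigr] = r^k \sum_{j=1}^m P_k(e_{\gamma_j}) \ge m r^k \varepsilon.$$

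Against this I will use the classical complex-polarization bound $|\widetilde P_k(x+iy)| \le D_k(\|x\|+\|y\|)^k$, where $D_k$ depends only on $k$ and $\|P_k\|$; since the real and imaginary parts of $z$ are of the form $\sum_j c_j r e_{\gamma_j}$ with $|c_j| \le 1$, the $1$-unconditionality and symmetry of the basis yield $|\widetilde P_k(z)| \le D_k(2 r s_m)^k$ with $s_m := \|\sum_{j=1}^m e_{\gamma_j}\|$. Choosing $m = m(r) := \lfloor 1/M(r) \rfloor$ forces $\sigmam(r\sum_{j=1}^{m(r)} e_{\gamma_j}) = m(r) M(r) \le 1$, hence $r s_m \le 1$ and $|\widetilde P_k(z)| \le D_k 2^k$. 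Combining the two estimates gives $m(r)\varepsilon r^k \le D_k 2^k$; since $m(r) \ge 1/(2M(r))$ for small $r$, this rearranges to $M(r)/r^k \ge \varepsilon/(D_k 2^{k+1}) > 0$ for all sufficiently small $r$. Because $k \le n-1$ and $t^k \ge t^{n-1}$ on $(0,1)$, this contradicts the hypothesis $\liminf_{t\to 0} M(t)/t^{n-1} = 0$.

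The main obstacle is extracting precisely the diagonal sum: naive Rademacher ($\pm 1$) averaging handles $k = 1, 2$ but for $k \ge 3$ either returns zero (odd $k$) or contributes uncontrollable even-multiplicity off-diagonal terms (even $k \ge 4$) that would overwhelm or cancel the diagonal. The random $k$-th roots of unity are tuned exactly to annihilate every non-diagonal tuple in expectation, at the minor cost of passing to the complexification, which is absorbed into the classical polarization estimate.
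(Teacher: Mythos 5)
Your proof is correct, but it takes a genuinely different route from the paper's. You split $P$ into homogeneous components once and for all and attack each component $P_k$ directly, isolating the diagonal sum $\sum_j P_k(e_{\gamma_j})$ by averaging over independent uniform $k$-th roots of unity in the complexification and controlling the result by the polarization bound $\|\widehat P_k\|\le (k^k/k!)\|P_k\|$; the paper instead argues by induction on the degree of a form, using the inductive hypothesis to kill the mixed terms so that $P(x+te_\gamma)=P(x)+t^kP(e_\gamma)$ off a countable set, and then greedily accumulates $m\approx 1/(\varepsilon t^k)$ coordinates of height $t$ (with $M(t)<\varepsilon t^{n-1}$, so $\sigma_M\le 1$) to make $P$ unbounded on the unit ball. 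Both arguments rest on the same quantitative tension --- roughly $1/M(r)$ unit vectors of height $r$ fit in the unit ball while each contributes $r^k$ to the diagonal --- and both need the (standard, but worth stating) fact that each homogeneous part of a bounded polynomial is itself bounded, which follows from Lagrange interpolation in the scalar variable. What your approach buys is the absence of any induction and of the exceptional-set bookkeeping, plus an explicit conclusion of independent interest ($M(r)\ge c\,r^k$ for all small $r$ whenever some $k$-homogeneous part is bounded away from $0$ on uncountably many unit vectors, directly contradicting \eqref{eq:Mn-zero} since $k\le n-1$); the cost is the extra machinery of complexification and polarization, which the paper's purely real, elementary iteration avoids entirely. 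The only points you should make explicit in a written version are the $1$-unconditionality of the unit vector basis of $h_M(\Gamma)$ (immediate from the Luxembourg norm), used to bound the real and imaginary parts of your randomized vector, and the boundedness of each $P_k$.
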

\begin{proof}
    Since a polynomial is a finite sum of forms, it is actually enough to prove that $P(e_\gamma) = 0$ for $\gamma\not\in A $, because then we will simply unite the respective $A$'s for each of the forms comprising $P$ and use that the latter are homogeneous.

    We do induction on $k := \mathrm{deg}\, P$. If $k=0$ the statement is trivial. Let it be true for all polynomials of degree less than $k-1$. Let $P$ be a $k$-form, $k<n$. Then
    $$
        P(x+y) = P(x) + P_1(x,y) + P(y),\quad\forall x,y\in h_M,
    $$
    where $P_1(x,0) = 0$ and the degree of $P_1$ with respect to $y$ is less than $k-1$. So, by induction hypothesis, for each $x\in h_M$ there is countable $A_x\subset \Gamma$ such that  $P_1(x,e_\gamma)=0$ for each $\gamma\not\in A_x$, so $P_1(x,te_\gamma)=0$ for all $t$. Since $P$ is $k$-homogeneous $P(te_\gamma) = t^kP(e_\gamma)$ and we get
    \begin{equation}
        \label{eq:ind}
        P(x+te_\gamma) = P(x) + t^kP(e_\gamma),\quad\forall \gamma\in\Gamma\setminus A_x.
    \end{equation}
    Assume now that $P(e_\gamma)\neq0$ for uncountably many $\gamma\in\Gamma$. Then there is $r>0$ such that $|P(e_\gamma)| \ge r$ for uncountably many $\gamma$'s. By multiplying $P$ by a non-zero constant we may assume that
    \begin{equation}
        \label{eq:P-ge-1}
        P(e_\gamma) \ge 1,\quad\forall\gamma\in B,
    \end{equation}
    where $B\subset\Gamma$ is uncountable.

    We will show that $\sup_{\|x\|\le1}P(x)=\infty$ and get a contradiction.

    Let $\varepsilon \in (0,1)$ be arbitrary. From \eqref{eq:Mn-zero} there is $t\in(0,\varepsilon)$ such that
    $$
        M(t) < \varepsilon t^{n-1} \le \varepsilon t^k.
    $$
    Let
    $$
        m := \left[\frac{1}{\varepsilon t^k}\right].
    $$
    We iterate on \eqref{eq:ind} and \eqref{eq:P-ge-1} to get $x_0,\ldots,x_m$ as follows. Set $x_0=0$. If $x_j$ is constructed, take $\gamma_j\in B\setminus (A_{x_j}\cup \mathrm{supp}\, x_j)$ and let $x_{j+1} = x_j+te_{\gamma_j}$. So,
    $$
        P(x_{j+1}) \ge P(x_j) + t^k,
    $$
    and, therefore,
    $$
        P(x_m) \ge mt^k \ge \left(\frac{1}{\varepsilon t^k} - 1\right)t^k = \frac{1}{\varepsilon} -t^k > \frac{1}{\varepsilon} - \varepsilon^k \to \infty,
    $$
    as $\varepsilon\to0$.
    On the other hand, since
    $$
        x_m = \sum_{j=0}^{m-1} t e_{\gamma_j},
    $$
    and $\gamma_j$'s do not repeat, we have
    $$
        \sigmam(x_m) = mM(t) < m\varepsilon t^k \le \left(\frac{1}{\varepsilon t^k} \right)\varepsilon t^k = 1,
    $$
    so $\|x_m\| \le 1$.
\end{proof}

\section{Necessary Condition for Smoothness}\label{sec:nc}
In this section we prove the following necessary condition.
\begin{proposition}
  \label{prop:main}
  Let $p\in\mathbb{N}$.
  If there is continuous $p$-times G\^ateaux differentiable bump function on $h_M(\Gamma)$, then either
  \begin{equation}
      \label{eq:ddd}
      \lim_{t\to 0}\frac{M(t)}{t^p} = 0,
  \end{equation}
  or $p$ is even and $M$ is equivalent to $t^p$ at zero, that is, $h_M(\Gamma)$ and $ \ell_p(\Gamma)$ are isomorphic.
\end{proposition}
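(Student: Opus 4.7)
My plan is to argue the contrapositive: suppose $b:h_M(\Gamma)\to\mathbb{R}$ is a continuous $p$-times G\^ateaux differentiable bump and $\limsup_{t\to 0}M(t)/t^p>0$; I will deduce that $p$ is even and $M\sim t^p$ at zero. First, apply Theorem~\ref{thm:t-seq} to extract a positive sequence $T=(t_k)_{k=1}^{\infty}\to 0$ such that $M$ satisfies the $\Delta_2$-condition relative to $T$ and $\liminf_k M(t_k)/t_k^p > 0$. Passing to a subsequence (using the a priori upper bound on the growth of $M$ forced by the existence of a smooth bump), $M(t_k)/t_k^p$ stays bounded, so $M(t_k)/t_k^{p-1}\to 0$ and hence $\liminf_{t\to 0}M(t)/t^{p-1}=0$, making Lemma~\ref{lem:poly-alg} available with $n=p$.

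Next I carry out a Bonic--Frampton direct induction in the style of \cite{MT}, executed entirely within the set $K := (\{\pm T\}\cup\{0\})^{\Gamma}\cap B_{h_M(\Gamma)}$. Starting from a point with $b>0$, at each stage $n$ I pick a fresh coordinate $\gamma_n$ and an increment $\pm t_{k_n}e_{\gamma_n}$ chosen to drive $b$ towards its supremum over $K$. By Fact~\ref{fact:bound-complete} the coordinate-wise limit $x_\infty$ is a bona fide element of $h_M(\Gamma)$, $b(x_\infty)>0$ by continuity, and $x_\infty$ is near-maximal against further $T$-valued perturbations on fresh coordinates; the same procedure applied to $-b$ yields a companion $y_\infty$ that is near-minimal. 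This step is what replaces the Stegall variational principle of the $\Delta_2$-setting, with Fact~\ref{fact:bound-complete} guaranteeing that coordinate-wise limits inside $K$ are automatically norm-valid.

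At $x_\infty$ I expand
\[
  b(x_\infty + h) = b(x_\infty) + \sum_{k=1}^{p} P_k(h) + o(\|h\|^p),
\]
and analogously at $y_\infty$. Lemma~\ref{lem:poly-alg} kills $P_1,\dots,P_{p-1}$ on $te_\gamma$ for every $\gamma$ outside some countable set $A$, so for $\gamma\notin A$ one has $b(x_\infty\pm t_k e_\gamma)=b(x_\infty)+(\pm 1)^p t_k^p P_p(e_\gamma)+o(t_k^p)$. Near-maximality forces $(\pm 1)^p t_k^p P_p(e_\gamma)\le o(t_k^p)$ for both sign choices, which for $p$ odd yields $P_p(e_\gamma)=0$ on all but countably many $\gamma$ and for $p$ even yields $P_p(e_\gamma)\le 0$; the companion analysis at $y_\infty$ yields $P_p(e_{\gamma'})\ge 0$ on a similar uncountable set. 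In the odd case, the vanishing of the full Taylor polynomial on uncountably many fresh coordinates contradicts the strict progress $b(x_{n+1})-b(x_n)\ge\eta_n>0$ built into the induction; in the even case, a quantitative tracking of the induction increments produces $|P_p(e_\gamma)|\ge c>0$ on uncountably many $\gamma$, and pulling this lower bound back through the Taylor expansion and the definition of the Luxemburg norm yields $M(t)\ge c't^p$ for all small $t$, which combined with $\limsup M(t)/t^p<\infty$ gives $M\sim t^p$ at zero. The main obstacle is executing the induction of the second step so that $x_\infty$ genuinely lives in $h_M(\Gamma)$ without the $\Delta_2$-condition; restricting all perturbations to the $T$-valued set $K$, where Fact~\ref{fact:bound-complete} secures bounded completeness, is the crucial move.
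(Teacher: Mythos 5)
Your overall strategy is the right one and close in spirit to the paper's (which itself notes that an Ekeland/Stegall-type maximization restricted to $\left(\{\pm T\}\cup\{0\}\right)^\Gamma$ would work); identifying Fact~\ref{fact:bound-complete} on the $T$-valued set as the substitute for bounded completeness is exactly the key move. However, there are three genuine gaps. First, your step ``$M(t_k)/t_k^p$ stays bounded, so $\liminf_{t\to0}M(t)/t^{p-1}=0$'' rests on an ``a priori upper bound on the growth of $M$ forced by the existence of a smooth bump'' that is nowhere established and is in fact false in general: when $\limsup_{t\to0}M(t)/t^p=\infty$ the sequence produced by Theorem~\ref{thm:t-seq} satisfies $M(t_k)/t_k^p\to\infty$ by \eqref{eq:t-seq-lim}, and $\liminf_{t\to0}M(t)/t^{p-1}$ can be strictly positive (think of $M$ close to $t^{p-1}$). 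Without $\liminf_{t\to0}M(t)/t^{p-1}=0$ you cannot invoke Lemma~\ref{lem:poly-alg} to kill $P_1,\dots,P_{p-1}$, and your argument has no substitute. The paper resolves this by first proving the statement under the extra hypothesis \eqref{eq:ad-as} (Proposition~\ref{prop:add}) and then removing that hypothesis by an outer induction on $p$; some reduction of this kind is indispensable.

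Second, the variational core is not carried out. For the Taylor expansion at $x_\infty$ to yield $(\pm1)^pt_k^pP_p(e_\gamma)\le o(t_k^p)$ you need the near-maximality defect against fresh $T$-valued perturbations to be $o(t_k^p)$, and nothing in your greedy construction guarantees that; moreover the contradiction you claim in the odd case (``vanishing of the Taylor polynomial contradicts the strict progress $b(x_{n+1})-b(x_n)\ge\eta_n$'') is a non sequitur, since that progress concerns the coordinates $\gamma_n$ at the points $x_n$, not fresh coordinates at $x_\infty$. The paper's mechanism is different and complete: it compares increments of $f=b^{-2}-b^{-2}(0)$ with increments of $\sigmam$ as in \eqref{eq:f-sigma-le}, always chooses the smallest admissible index $m(x_{n-1})$, and derives the contradiction from the fact that a single index $l$ working at the limit $\bar x$ forces $\nu_n\ge t_l>0$ for all large $n$, against the norm convergence of $\sum s_j\nu_je_{\gamma_j}$. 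Third, in the even case your conclusion $M(t)\ge c't^p$ for \emph{all} small $t$ is asserted rather than proved --- the construction only sees $t\in T$, so at best you control $\liminf_k M(t_k)/t_k^p$; the paper instead observes that if $M\not\sim t^p$ and $\limsup_{t\to0}M(t)/t^p<\infty$ then $\liminf_{t\to0}M(t)/t^p=0$, whence Lemma~\ref{lem:poly-alg} annihilates the $p$-form on all but countably many unit vectors and reduces to the case already handled.
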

We start with:
\begin{proof}[\textbf{\emph{Proof of Theorem~\ref{thm:t-seq}}}]
  Let
  $$
    \varphi(t) := \frac{M(t)}{t^p},
  $$
  so that $\varphi$ is continuous on $(0,1]$ and
  $$
    \limsup_{t\to0} \varphi(t) > 0,
  $$
  because of \eqref{eq:t-seq-cond}. We can choose a sequence $t_k\to0$ which satisfies \eqref{eq:t-seq-lim} and
  \begin{equation}
    \label{eq:M-le-tk-prim}
    \frac{\varphi(t)}{\varphi(t_k)} \le s,\quad\forall t\in[t_k,1],\ \forall k\in\mathbb{N}
  \end{equation}
  for some $s > 0$.

  Indeed, if $\varphi(t)$ is bounded on $(0,1]$, say, $\varphi(t) \le b$ for all $t\in (0,1]$, take some sequence $t_k\to0$ such that $\varphi(t_k) \ge r >0$ for all $k\in\mathbb{N}$, meaning that \eqref{eq:t-seq-lim} is fulfilled, but also $\varphi(t)/\varphi(t_k) \le b/r$ for all $t\in(0,1]$ and  $k\in\mathbb{N}$, so \eqref{eq:M-le-tk-prim} holds.

  If, on the other hand, $\varphi(t)$ is unbounded, take $t_k\in[k^{-1},1]$ such that $\varphi(t_k)=\max\{\varphi(t):\ t\in[k^{-1},1]\}$. Obviously, $\varphi(t_k)\to\infty$, so $t_k\to0$, and \eqref{eq:t-seq-lim} is true, but also $\varphi(t)/\varphi(t_k) \le 1$ for all $t\in[t_k,1]$, and  \eqref{eq:M-le-tk-prim} is true.

  Now, from \eqref{eq:M-le-tk-prim} it easily follows that $M$ satisfies the $\Delta_2$-condition with respect to $T=(t_k)_{k=1}^\infty$. Indeed, fix $c>1$ and let $T_c = \{t\in T:\ tc \le 1\}$. Since the set $T\setminus T_c$ is finite, it is enough to prove \eqref{eq:relative-del-2} for $T_c$. But for $t\in T_c$ the estimate \eqref{eq:M-le-tk-prim} gives $\varphi(ct) \le s\varphi(t)$, that is, $M(ct) \le sc^p M(t)$, so
  $$
  \sup_{t\in T_c}\frac{M(ct)}{M(t)} \le sc^p.
  $$
\end{proof}

We will first prove Proposition~\ref{prop:main} under the additional assumption \eqref{eq:ad-as}, because after that the general case easily follows.

\begin{proposition}
  \label{prop:add}
  Let $M$ be such that
  \begin{equation}
    \label{eq:ad-as}
    \lim_{t\to0}\frac{M(t)}{t^{p-1}} = 0.
  \end{equation}
  If there is a continuous $p$-times G\^ateaux differentiable bump function on $h_M(\Gamma)$, then either \eqref{eq:ddd} is fulfilled or $p$ is even and $M$ is equivalent to $t^p$ at zero.
\end{proposition}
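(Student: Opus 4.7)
The plan is to argue by contradiction. Assume a continuous $p$-times G\^ateaux differentiable bump $b$ on $h_M(\Gamma)$ exists, that \eqref{eq:ad-as} holds, yet neither \eqref{eq:ddd} nor the exceptional case ``$p$ even and $M$ equivalent to $t^p$ at zero'' is true; in particular $\limsup_{t\to 0}M(t)/t^p>0$. Then apply Theorem~\ref{thm:t-seq} to fix a positive sequence $T=(t_k)\to 0$ such that $M$ satisfies the $\Delta_2$-condition with respect to $T$ and $M(t_k)\ge c\,t_k^p$ for some $c>0$ and all sufficiently large $k$.

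The second ingredient is the G\^ateaux--Taylor expansion of $b$ along basis vectors. At a point $x$, write $b(x+h)=b(x)+P_x(h)+o_h(\|h\|^p)$ and split $P_x=L_x+Q_x$ into its degree-$<p$ part and its $p$-homogeneous part. Hypothesis \eqref{eq:ad-as} is exactly the condition needed to invoke Lemma~\ref{lem:poly-alg} with $n=p$, producing a countable set $A_x\subset\Gamma$ such that $L_x(te_\gamma)=0$ for every $\gamma\notin A_x$ and every $t\in\mathbb{R}$. Consequently, for any such $\gamma$ one has
$$
 b(x+te_\gamma)=b(x)+t^p Q_x(e_\gamma)+o_{x,\gamma}(t^p).
$$

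The core of the proof is the production, by direct induction in the Bonic--Frampton / \cite{MT} style, of a variationally distinguished point $x^*$ lying in the set
$$
 K:=\bigl(\{\pm T\}\cup\{0\}\bigr)^\Gamma\cap\{\sigmam\le 1\},
$$
which by Fact~\ref{fact:bound-complete} is honestly a subset of the unit ball of $h_M(\Gamma)$. One builds $x^*$, supported on countably many coordinates with values in $\pm T$, such that $b(x^*)$ is close to $\sup_K b$ and $b$ cannot be increased much by any single $T$-coordinate perturbation. The restriction to $K$ is the new technical feature: without the full $\Delta_2$-condition one cannot perform this induction in all of $h_M(\Gamma)$, but the relative $\Delta_2$-property of $M$ with respect to $T$ together with the quantitative lower bound $M(t_k)\ge c\,t_k^p$ give the control required to iterate within $K$ and extract a limit via Fact~\ref{fact:bound-complete}.

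With $x^*$ in hand, combine its near-optimality with the displayed Taylor formula to reach a contradiction. In the odd case, applying the formula to $\pm t_k e_\gamma$ for $\gamma\notin A_{x^*}\cup\supp x^*$ forces $Q_{x^*}(e_\gamma)=0$ at all such $\gamma$, and then an iterated $T$-perturbation, legitimate in $K$, would drive $b$ past its near-supremum. In the even case with $M\not\sim t^p$ at zero, one exploits the strict gap between $\liminf$ and $\limsup$ of $M(t)/t^p$ to find $t_k$ and $\gamma$ for which $t_k^p Q_{x^*}(e_\gamma)$ is positive but not matched by the corresponding increase of $\sigmam$, again producing an admissible perturbation that violates near-optimality. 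I expect the main obstacle to be the rigorous inductive construction of $x^*$ inside $K$ without relying on norm-completeness of $h_M(\Gamma)$: all limits have to be extracted coordinate-wise via Fact~\ref{fact:bound-complete}, and the even case in particular demands careful bookkeeping to exploit the gap between $\liminf$ and $\limsup$ of $M(t)/t^p$.
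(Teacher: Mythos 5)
Your list of ingredients is the right one (Theorem~\ref{thm:t-seq} for the sequence $T$, Lemma~\ref{lem:poly-alg} to kill the degree-$<p$ part via \eqref{eq:ad-as}, Fact~\ref{fact:bound-complete} to extract limits of $T$-valued iterates), but the engine of your argument --- a point $x^*$ with $b(x^*)$ close to $\sup_K b$ --- does not work, and this is precisely the difficulty the paper is built to avoid. Near-maximality only gives $b(x^*+t_k e_\gamma)-b(x^*)\le\varepsilon$ for a \emph{fixed} $\varepsilon>0$, which is vacuous against an increment of size $t_k^p Q_{x^*}(e_\gamma)+o(t_k^p)\to0$; in particular your claim that in the odd case this forces $Q_{x^*}(e_\gamma)=0$ is false (you only get $|Q_{x^*}(e_\gamma)|\le \varepsilon/t_k^p+o(1)$, which is no bound at all). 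To convert near-optimality into usable $p$-th order information you would need exact attainment or a variational principle (Stegall on the weakly compact set $K$, or Ekeland in a suitable metric --- the routes the authors mention in the introduction but deliberately do not take), and your proposal supplies neither. Likewise, ``an iterated $T$-perturbation would drive $b$ past its near-supremum'' cannot be executed as stated: after one step you are at a new point with a new exceptional set $A_x$, a new form $Q_x$ and a new remainder, and there is no uniform control letting the increments accumulate.

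The paper's proof never maximizes $b$. It replaces $b$ by $f=b^{-2}-b^{-2}(0)$ (equal to $+\infty$ off $\supp b$), so that $f\ge0$, $f(0)=0$ and $\dom f$ sits inside the unit ball, and then proves a pointwise \textsc{Claim}: at every $x\in\dom f$ there are uncountably many $\gamma$ and a sign $s$ with $f(x+st_ke_\gamma)-f(x)<\sigmam(x+st_ke_\gamma)-\sigmam(x)=M(t_k)$ for all large $k$ (the case ``$p$ even, $\limsup M(t)/t^p<\infty$'' being excluded by a second application of Lemma~\ref{lem:poly-alg} at order $p$, using $\liminf M(t)/t^p=0$ when $M\not\sim t^p$). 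The induction then greedily adds $s_n\nu_n e_{\gamma_n}$ where $\nu_n=t_{m(x_{n-1})}$ and $m(x)$ is the \emph{smallest} admissible index over all admissible $\gamma$; the inequality telescopes to $f(x_n)\le\sigmam(x_n)\le1$, which keeps the iterates in $\dom f$ and lets Fact~\ref{fact:bound-complete} produce a limit $\bar x\in\dom f$. The contradiction is not that $b$ exceeds its supremum but that an admissible index $l$ at $\bar x$ transfers, by continuity of $f$, to $x_n$ for large $n$, forcing $\nu_n\ge t_l>0$ and contradicting convergence of the series. This minimal-index bookkeeping, and the comparison of $f$-increments with the exactly additive $\sigmam$-increments, is the missing idea in your sketch; without it (or a genuine variational principle on $K$) the argument does not close.
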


\begin{proof}
  Assume the contrary, that is, \eqref{eq:ddd} fails and, if $p$ is even, $M$ is not equivalent to $t^p$ at zero; yet there is continuous $p$-times G\^ateaux differentiable bump function $b$ on $h_M(\Gamma)$.

  By considering instead of $b$ the bump $b(r(x-x_0))$, where $x_0$ is such that $b(x_0)\neq0$ and $r>0$ is large enough, we can assume without loss of generality that $0\in\supp b\subset B_{h_M(\Gamma)}$, the unit ball of $h_M(\Gamma)$. For technical reasons we introduce the function
  $$
    f(x) := \begin{cases}
          b^{-2}(x)-b^{-2}(0),& \mbox{if }x\in\supp b,\\
          +\infty,& \mbox{otherwise}.
      \end{cases}
  $$
Clearly, $f$ is a continuous function from $h_M(\Gamma)$ to $[0,\infty]$ which is $p$-times G\^ateaux differentiable at each point of its domain
$$
  \dom f := \{x\in h_M(\Gamma):\ f(x)<\infty\} = \supp b,
$$
and $f(0) = 0$.

Because \eqref{eq:ddd} fails, \eqref{eq:t-seq-cond} holds and we can apply Theorem~\ref{thm:t-seq} to get a sequence $T=(t_k)_{k=1}^\infty$ tending to $0$ such that $M$ satisfies the $\Delta_2$-condition with respect to $T$ and \eqref{eq:t-seq-lim} holds.

\medskip

\noindent\textsc{Claim.} For each $x\in\dom f$ there is uncountable $B(x)\subset\Gamma\setminus\supp x$ such that for each $\gamma\in B(x)$ there is $s=s(x,\gamma)\in\{-1,1\}$ such that
\begin{equation}
  \label{eq:f-sigma-le}
  f(x + st_ke_\gamma) - f(x) < \sigmam(x + st_ke_\gamma) - \sigmam(x)
\end{equation}
for all $k$ large enough.

Indeed, because $f$ is $p$-times G\^ateaux differentiable at $x\in\dom f$,
$$
  f(x+ ty) = f(x) + P(ty) + t^p Q(y) + o_y(t^p),
$$
where $P$ is a  polinomial of degree at most $(p-1)$ such that $P(0)=0$ and $Q$ is a $p$-form.

From \eqref{eq:ad-as} and   Lemma~\ref{lem:poly-alg}  there is a countable $A\subset\Gamma$ such that $P(te_\gamma) = 0$ for all $\gamma\in \Gamma\setminus A$ and $t\in\mathbb{R}$. We can assume $\supp x\subset A$, because the former is countable. So, for any $\gamma\in \Gamma\setminus A$
\begin{equation}
  \label{eq:o-malko}
  f(x+ te_\gamma) - f(x) = t^p Q(e_\gamma) + o_\gamma(t^p),\quad \sigmam(x + te_\gamma) - \sigmam(x) = M(t).
\end{equation}
We can immediately conclude if the set $\{\gamma:\ Q(e_\gamma) \neq0\}$ is countable. Indeed, we may assume that $A$ contains that set as well. Then for any $\gamma\in B(x) := \Gamma\setminus A$
$$
  \lim_{t\to 0} \frac{f(x+te_\gamma) - f(x)}{t^p} = 0,
$$
while $\liminf_{k\to\infty} M(t_k)/t_k^p > 0$ by \eqref{eq:t-seq-lim} and the assumption that \eqref{eq:ddd} fails. So, \eqref{eq:f-sigma-le} is fulfilled for $s=1$ and all $k$ large enough.

Let now $\{\gamma:\ Q(e_\gamma) \neq0\}$ be uncountable set. Then there is  uncountable $B(x)\subset \Gamma\setminus A$ such that all $Q(e_\gamma)$'s have the same sign on $B(x)$. Set $s= -\mathrm{sign}\,Q(e_\gamma)$. So, \eqref{eq:o-malko} can be rewritten as:
$$
  f(x+ st_ke_\gamma) - f(x) = s^pt_k^p Q(e_\gamma) + o_\gamma(t_k^p),\quad \sigmam(x + st_ke_\gamma) - \sigmam(x) = M(t_k).
$$

We have three distinct cases to consider:

(i) $p$ odd,

(ii) $p$ even and $\limsup_{t\to0}M(t)/t^p = \infty$, and

(iii) $p$ even and $\limsup_{t\to0}M(t)/t^p < \infty$.

In the case (i) we have $s^p=s$, so $r_\gamma := s^p Q(e_\gamma) < 0$ for $\gamma\in B(x)$ and, therefore,  $f(x+ st_ke_\gamma) - f(x) < 0$ for any fixed $\gamma\in B(x)$ and all $k\in\mathbb{N}$ large enough, because $\lim_{k\to\infty} (f(x+ st_ke_\gamma) - f(x))/ t_k^p = r_\gamma$. On the other hand $M(t_k) > 0$. Hence,  \eqref{eq:f-sigma-le} is fulfilled.

In the case (ii) because of \eqref{eq:t-seq-lim} we have that $M(t_k)/t_k^p\to\infty$, while  $|f(x+ st_ke_\gamma) - f(x)|/t_k^p$ is bounded for fixed $\gamma\in B(x)$, so \eqref{eq:f-sigma-le}  again is easily fulfilled.

The case (iii) is actually impossible under our present assumptions. Indeed, by assumption $M$ is not equivalent to $t^p$ at zero, so $\liminf_{t\to0} M(t)/t^p = 0$, but then Lemma~\ref{lem:poly-alg} implies $Q(e_\gamma)=0$ for all but countably many $\gamma$'s, contradiction.

\medskip

For convenience we assume that the sequence $T=(t_k)_{k=1}^\infty$ is strictly decreasing.
For $x\in\dom f$ and $\gamma\in B(x)$ we denote by $N(x,\gamma)$ the set of all positive integers $k$ for which \eqref{eq:f-sigma-le} holds for some $s=s(x,\gamma)$. From the \textsc{Claim} we get that $N(x,\gamma)\neq\emptyset$. We denote by $m(x)$ the smallest positive integer in the set $\displaystyle\bigcup_{\gamma\in B(x)} N(x,\gamma)$. We choose a $\mu(x)\in B(x)$ in such a way that
$$
  m(x) \in N(x,\mu(x)).
$$
Set $x_0 = 0$. Since $f(x_0)=0$, of course $x_0\in\dom f$. Using the \textsc{Claim} inductively, we choose sequences $(x_n)_{n=1}^\infty\subset\dom f$, $(\gamma_n)_{n=1}^\infty\subset\Gamma$, $(s_n)_{n=1}^\infty\subset\{-1,1\}$ such that for all $n\in\mathbb{N}$
\begin{equation}
   \label{eq:3?}
   \gamma_n = \mu(x_{n-1}),
\end{equation}
\begin{equation}
   \label{eq:3??}
   \nu_n = t_{m(x_{n-1})},
\end{equation}
$$
        x_{n} := x_{n-1} + s_n\nu_ne_{\gamma_n},
$$
\begin{equation}
        \label{eq:Tj-def}
        f(x_n) - f(x_{n-1}) \le \sigmam(x_n ) - \sigmam(x_{n-1}).
\end{equation}
Since all $x_n$'s are in the domain of $f$ which is  within the unit ball of $h_M(\Gamma)$, we get $\|\sum_{j=1}^n s_j\nu_je_{\gamma_j}\|=\|x_n\|\le 1$ for all $n\in\mathbb{N}$. From Fact~\ref{fact:bound-complete} it follows that the series  $\sum_{j=1}^\infty s_j\nu_je_{\gamma_j}$ converges in norm to some $\bar x\in h_M(\Gamma)$. Iterating on \eqref{eq:Tj-def} we get $f(x_n)-f(x_0) \le \sigmam(x_n) - \sigmam(x_0)$, that is, $f(x_n) \le \sigmam(x_n) $, for all $n\in\mathbb{N}$. From the continuity of $f$ and $\sigmam(x_n)\le 1$ we get $f(\bar x) \le 1$, so $\bar x\in\dom f$.

Pick $\beta\in B(\bar x)\setminus \supp \bar x$. Let $s = s(\bar x,\beta)$ and $l\in N(\bar x, \beta)$. By definition this means $f(\bar x +st_le_\beta) - f(\bar x) < \sigmam(\bar x +st_le_\beta) - \sigmam(\bar x)$. But the latter is equal to $M(t_l)$, so
\begin{equation}
   \label{eq:3Pound}
   f(\bar x +st_le_\beta) - f(\bar x) < M(t_l).
\end{equation}
On the other hand, $f(x_n +st_le_\beta) - f(x_n) \to f(\bar x +st_le_\beta) - f(\bar x)$, as $n\to\infty$. This and \eqref{eq:3Pound} imply $f(x_n +st_le_\beta) - f(x_n) < M(t_l)$ for all $n$ large enough. Note that $\sigmam(x_n +st_le_\beta) - \sigmam(x_n) = M(t_l)$, because $\beta\not\in\supp \bar x \supset \supp x_n$. So, $l\in N(x_n,\beta)$ for all large $n$'s. This implies $l\ge m(x_n)$ and, therefore, $m(x_n) \ge t_l > 0$. From \eqref{eq:3??} we deduce that $\nu_n\ge t_l > 0$ for all $n$ large enough, which contradicts the convergence of the series $\sum_{j=1}^\infty s_j\nu_j e_{\gamma_j}$.
\end{proof}

\begin{proof}[\emph{\textbf{Proof of Proposition~\ref{prop:main}}}]
  We do induction on $p$.

  If $p=1$ then \eqref{eq:ad-as} is just $M(t)\to 0$ as $t\to 0$, which is so, and Proposition~\ref{prop:add} shows that the statement is true in this case.

  Let the statement be true for $p-1$.

  If \eqref{eq:ad-as} is fulfilled then by Proposition~\ref{prop:add} the statement is true also for $p$.

  Assume that \eqref{eq:ad-as} is not fulfilled and there is a continuous $p$-times G\^ateaux differentiable bump function $b$ on $h_M(\Gamma)$. Since $b$ is also $(p-1)$-times G\^ateaux differentiable, from the induction hypothesis $M$ must be equivalent to $t^{p-1}$ at zero, but this implies \eqref{eq:ddd} and the induction is complete.
\end{proof}

\section{Reduction to a subspace}\label{sec:reduce}

In this final section we show how to derive Theorem~\ref{thm:sharp} from Proposition~\ref{prop:main}.
\begin{lemma}
  \label{lem:one-vector}
If
\begin{equation}
 \label{eq:mnz}
 \sup\left\{\frac{M(uv)}{u^pM(v)}:\ u,v\in (0,1]\right\} = \infty,
\end{equation}
then there is a $z\in h_M$ such that
\begin{equation}
 \label{eq:simple-cond}
 \limsup_{t \to 0} \frac{\sigmam(tz)}{t^p} = \infty.
\end{equation}
\end{lemma}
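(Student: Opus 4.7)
The plan is to construct $z$ as a block vector with multiplicities chosen to \emph{absorb}, rather than control, the uncontrollable ratios $M(n v_n)/M(v_n)$. The argument splits on $\limsup_{t \to 0} g(t)$, where $g(t) := M(t)/t^p$.

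If $\limsup_{t \to 0} g(t) = \infty$ then $z = e_1$ already gives $\sigmam(tz)/t^p = g(t)$ with limsup $\infty$. Assume henceforth $\limsup g < \infty$, so $g$ is bounded on $(0,1]$. Rewriting the ratio in \eqref{eq:mnz} via the substitution $s = uv$ as $g(s)/g(v)$ (with $s \le v$), the boundedness of $g$ forces the infinite supremum to arise from $g(v) \to 0$ along some $v_n \to 0$. Pairing $v_n$ with companions $u_n \in (0,1]$ making $g(u_n v_n)/g(v_n)$ large yields pairs with $c_n := M(u_n v_n)/(u_n^p M(v_n)) \to \infty$; a convexity bound $M(u_n v_n) \le u_n M(v_n)$ (valid since $u_n \le 1$) then forces $u_n \to 0$ as well, for otherwise $c_n \le u_n^{1-p}$ would remain bounded.

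The crucial refinement is that \eqref{eq:mnz} continues to hold with $u$ restricted to $(0, 1/n]$, since the large ratios come from $v$ in the valleys of $g$ and are not lost by making $u$ small. Thinning the subsequence, I enforce additionally
$$
c_n \ge n^3\, r_n \quad\text{and}\quad M(n v_n) \le 1/n^2,
$$
where $r_n := M(n v_n)/M(v_n)$ may be enormous. I then define
$$
N_n := \left\lceil \frac{1}{n^2\, r_n\, M(v_n)} \right\rceil
$$
and let $z$ place $N_n$ copies of $v_n$ on pairwise disjoint finite blocks of $\mathbb{N}$. For \eqref{eq:simple-cond}, taking $t_n := u_n \to 0$ gives
$$
\frac{\sigmam(u_n z)}{u_n^p} \ge \frac{N_n M(u_n v_n)}{u_n^p} = N_n c_n M(v_n) \ge \frac{c_n}{n^2 r_n} \ge n \to \infty.
$$
For $z \in h_M$, fix $\lambda > 0$; for $n \ge \lambda$ one has $M(\lambda v_n) \le M(n v_n) = r_n M(v_n)$, whence
$$
N_n M(\lambda v_n) \le \left(\frac{1}{n^2\, r_n\, M(v_n)} + 1\right) r_n M(v_n) = \frac{1}{n^2} + M(n v_n) \le \frac{2}{n^2},
$$
a summable bound; hence $\sigmam(\lambda z) < \infty$ for every $\lambda > 0$.

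The main obstacle is the absence of any uniform bound on $M(\lambda v_n)/M(v_n)$ that a $\Delta_2$-condition would provide; the trick of tying $N_n$ to $r_n$ rather than only to $M(v_n)$ sidesteps this, because the ratio cancels out in the computation above. The price is the stronger requirement $c_n \ge n^3 r_n$ on the pairs, which is supplied by applying \eqref{eq:mnz} with $u$ restricted to $(0, 1/n]$.
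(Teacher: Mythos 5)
Your overall scheme (block vector with multiplicities $N_n$ tied to the pairs from \eqref{eq:mnz}) is sound, and the two computations at the end (the lower bound $\sigmam(u_n z)/u_n^p \ge n$ and the summability of $N_n M(\lambda v_n)$) are correct \emph{given} your selection of pairs. The gap is in that selection: the requirement $c_n \ge n^3 r_n$ is asserted but not justified, and the justification you offer --- ``supplied by applying \eqref{eq:mnz} with $u$ restricted to $(0,1/n]$'' --- does not work, because $r_n = M(nv_n)/M(v_n)$ depends only on $v_n$ and is completely unaffected by shrinking $u$. Writing $g(t)=M(t)/t^p$ and $b=\sup_{(0,1]}g<\infty$, one always has $c_n = g(u_nv_n)/g(v_n) \le b/g(v_n)$, while $n^3 r_n = n^{p+3} g(nv_n)/g(v_n)$; so $c_n \ge n^3 r_n$ forces $g(nv_n) \le b/n^{p+3}$, i.e.\ $nv_n$ must \emph{itself} sit deep in a valley of $g$. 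Your procedure takes $v_n$ to be points where $g(v_n)\to 0$ and then ``thins,'' but thinning a sequence of pairs never creates this property: there are convex $M$ (e.g.\ with $M$ nearly flat on $[\omega_k^{3/2},\omega_k]$ and $M(t)\approx t^p$ elsewhere) for which $g$ rebounds to order $b$ already at $2\omega_k$ for every valley bottom $\omega_k$, so that $c_n \ge n^3 r_n$ fails for $n=2$ at every such $v_n=\omega_k$.

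The step can be repaired, but it needs an extra idea you don't state: starting from pairs $\sigma_k\le\omega_k$ with $C_k:=g(\sigma_k)/g(\omega_k)\to\infty$ (whence $\sigma_k/\omega_k\le C_k^{-1/p}\to 0$ by monotonicity of $M$), put $v_n:=\omega_{k(n)}/n$ and $u_n:=n\sigma_{k(n)}/\omega_{k(n)}\le 1$; then $c_n\ge n^3r_n$ reduces exactly to $C_{k(n)}\ge n^{p+3}$, which thinning \emph{can} arrange. Note that the paper sidesteps the whole issue: it never compares $M(nv_k)$ with $M(v_k)$, because the entries of $z$ on the $n$-th dyadic block of indices are taken to be $v_k/n$ rather than $v_k$, so that membership in $h_M$ only requires $M((i/n)v_k)\le M(v_k)$ for $n\ge i$, which is pure monotonicity. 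As written, your proof has a genuine hole at its central selection step.
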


\begin{proof}

If
$$
  \limsup_{t\searrow 0} \frac{M(t)}{t^p} = \infty
$$
any unit vector will do.

Assume that
\begin{equation}
  \label{eq:less-infty}
 \limsup_{t\searrow 0} \frac{M(t)}{t^p} < \infty.
\end{equation}
Since the function $M(uv)/u^pM(v)$ is continuous on the square $(u,v)\in(0,1]\times(0,1]$, we get from \eqref{eq:less-infty} that for any $a\in (0,1]$
$$
  \sup\left\{\frac{M(uv)}{u^pM(v)}:\ u\in (0,1],\ v\in(a,1]\right\} < \infty.
$$
From \eqref{eq:mnz} it follows that
$$
  \sup\left\{\frac{M(uv)}{u^pM(v)}:\ u\in (0,1],\ v\in(0,a]\right\} = \infty.
$$
Thus there are $u_k,v_k\in (0,1]$, $k\in \mathbb{N}$,  such that

\begin{equation}
  \label{eq:seq-M}
  M(v_k) < \frac{1}{k^3},
\end{equation}
and
\begin{equation}
 \label{eq:sec-2}
  \frac{M(u_kv_k)}{u_k^pM(v_k)} > k^3.
\end{equation}

Note that from \eqref{eq:sec-2} and the convexity of $M$ it follows that
$$
  k^3 < \frac{M(u_kv_k)}{u_k^pM(v_k)} \le \frac{u_kM(v_k)}{u_k^pM(v_k)} =\frac{1}{u_k^{p-1}},
$$
so
\begin{equation}
 \label{eq:sec-3}
 u_k < \frac{1}{k^{3/(p-1)}}.
\end{equation}

Let $A_k$, $k\in\mathbb{N}$, be disjoint finite subsets of $\Gamma$ such that $A_k$ has $[ 1/k^2M(v_k)]$ elements, or, in other words,
\begin{equation}
 \label{eq:An-card}
 \sum _{\gamma\in A_k} 1 = \left[ \frac{1}{k^2M(v_k)}\right].
\end{equation}
Note that from \eqref{eq:seq-M}  all $A_k$'s are non-empty.

Define the vector $z$ with support in $\cup A_k$ by
$$
  z(\gamma) = \frac{v_k}{n},\quad \gamma \in A_k,\ k\in[2^{n-1}+1,2^n].
$$
To check that $z\in h_M$, fix $i\in \mathbb{N}$.
\begin{eqnarray*}
  \sigmam (iz) &=& \sum _{k=2}^\infty \sum _{\gamma\in A_k} M(iz(\gamma))\\
       &=& \sum _{n=1}^\infty \sum _{k=2^{n-1}+1}^{2^n} \sum _{\gamma\in A_k} M\left(\frac{iv_k}{n}\right)\quad \mbox{from \eqref{eq:An-card} }\Rightarrow\\
       &=& \sum _{n=1}^\infty \sum _{k=2^{n-1}+1}^{2^n} \left[ \frac{1}{k^2M(v_k)}\right] M\left(\frac{iv_k}{n}\right)\\
       &\le& \sum _{n=1}^\infty \sum _{k=2^{n-1}+1}^{2^n} \frac{M((i/n)v_k)}{k^2M(v_k)}.
\end{eqnarray*}
In order to see that $\sigmam (iz)<\infty$ we estimate the tail $\sum _{n=i}^\infty$ of the above sum. For $n\ge i$ we have $M((i/n)v_k)\le M(v_k)$, so
$$
\sum _{n=i}^\infty \sum _{k=2^{n-1}+1}^{2^n} \frac{M((i/n)v_k)}{k^2M(v_k)}
\le \sum _{n=i}^\infty \sum _{k=2^{n-1}+1}^{2^n} \frac{1}{k^2} = \sum _{k=2^{i-1}+1}^\infty \frac{1}{k^2} < \infty.
$$
Therefore,
$$
  \sigmam (iz) < \infty,\ \forall i\in\mathbb{N} \Rightarrow z\in h_M.
$$

Now,
\begin{eqnarray*}
  \sigmam (nu_{2^n}z) &\ge& \sum _{\gamma\in A_{2^n}} M(nu_{2^n}z(\gamma))\\
     &=& \sum _{\gamma\in A_{2^n}} M(u_{2^n}v_{2^n}) \quad \mbox{from \eqref{eq:An-card} }\Rightarrow\\
     &=&  \left[ \frac{1}{2^{2n}M(v_{2^n})}\right]M(u_{2^n}v_{2^n})\\
     &=& \left( \frac{1}{2^{2n}M(v_{2^n})}\right)M(u_{2^n}v_{2^n})\omega_n,\\
\end{eqnarray*}
where because of \eqref{eq:seq-M}
$$
  \omega_n = \left[ \frac{1}{2^{2n}M(v_{2^n})}\right]\left( \frac{1}{2^{2n}M(v_{2^n})}\right)^{-1} \to 1,\mbox{ as }n\to\infty.
$$
From \eqref{eq:sec-2} it follows that
$$
\frac{M(u_{2^n}v_{2^n})}{M(v_{2^n})} \ge 2^{3n}u^p_{2^n}
$$
and so
$$
\sigmam (nu_{2^n}z) \ge 2^nu^p_{2^n}\omega_n.
$$
Therefore,
$$
  \lim_{n\to\infty} \frac{\sigmam (nu_{2^n}z)}{(nu_{2^n})^p} \ge \lim\frac{2^n}{n^p}=\infty.
$$
Since from \eqref{eq:sec-3} it follows that
$$
nu_{2^n} < \frac{n}{2^{3n/(p-1)}} \to 0,
$$
 \eqref{eq:simple-cond} holds.
\end{proof}

Now we are ready to conclude.

\begin{proof}[\textbf{\emph{Proof of Theorem~\ref{thm:sharp}}}]
    If $p > \alpha_M$ then by \eqref{eq:alfa-def}
    $$
    \sup\left\{\frac{M(uv)}{u^pM(v)}:\ u,v\in (0,1]\right\} = \infty,
    $$
    and Lemma~\ref{lem:one-vector} and Lemma~\ref{lem:subspace} show that $h_M(\Gamma)$ has a  subspace $h_N(\Gamma)$ generated by one vector  with
    $$
      \limsup_{t\to0}\frac{N(t)}{t^p} = \infty.
    $$
    By Proposition~\ref{prop:main} there is no continuous $p$-times G\^ateaux differentiable bump function on $h_N(\Gamma)$. But if $b$ is a continuous $p$-times G\^ateaux differentiable bump on $h_M(\Gamma)$, such that $b(x_0)\neq0$ then the reduction of $x\to b(x-x_0)$ to $h_N(\Gamma)$ is a continuous $p$-times G\^ateaux differentiable bump. This contradiction shows that necessarily
    $$
      \alpha_M \le p.
    $$
    If $\alpha_M < p$ then we have (a) and if $\alpha_M = p$ the two cases of Proposition~\ref{prop:main} cover (b) and (c).
\end{proof}

\end{document}